\documentclass[a4paper, 12pt]{article}
\usepackage{fullpage}
\usepackage{amsmath, amsthm, amssymb, mathrsfs, graphicx, subfigure}
\usepackage{ifthen, url}
\usepackage[usenames]{color}

\usepackage[sort&compress]{natbib}
\bibpunct{(}{)}{;}{a}{}{,}

\theoremstyle{plain}

\newtheorem{lemma}{Lemma}
\newtheorem{proposition}{Proposition}

\theoremstyle{remark}

\theoremstyle{definition}

\theoremstyle{remark}

\theoremstyle{definition}


\newcommand{\FF}{\mathbb{F}}

\newcommand{\YY}{\mathbb{Y}}

\newcommand{\A}{\mathscr{A}}

\newcommand{\Y}{\mathscr{Y}}

\newcommand{\E}{\mathsf{E}}

\newcommand{\prob}{\mathsf{P}}

\newcommand{\eps}{\varepsilon}
\renewcommand{\phi}{\varphi}

\newcommand{\ftrue}{f^\star}

\newcommand{\fhat}{\hat f}

\begin{document}

\title{A note on Bayesian convergence rates under local prior support conditions}
\author{
Ryan Martin \\ 
Department of Mathematics, Statistics, and Computer Science \\ 
University of Illinois at Chicago \\ 
\url{rgmartin@uic.edu} \\
\mbox{} \\
Liang Hong \\
Department of Mathematics \\
Robert Morris University \\
\url{hong@rmu.edu} \\
\mbox{} \\
Stephen G.~Walker \\
Department of Mathematics \\
University of Texas at Austin \\
\url{s.g.walker@math.utexas.edu}
}
\date{\today}

\maketitle

\begin{abstract}
Bounds on Bayesian posterior convergence rates, assuming the prior satisfies both local and global support conditions, are now readily available.  In this paper we explore, in the context of density estimation, Bayesian convergence rates assuming only local prior support conditions.  Our results give optimal rates under minimal conditions using very simple arguments.     

\medskip

\emph{Keywords and phrases:} Density estimation; Hellinger distance; martingale; predictive density; pseudo-posterior.
\end{abstract}

\section{Introduction}
\label{S:intro}

The rate of convergence for Bayesian posterior quantities is helpful in choosing among the long list of seemingly reasonable priors, especially in nonparametric problems.  Since the choice of prior is a fundamental problem in Bayesian analysis, it is no surprise that considerable research efforts have been invested to develop techniques for bounding the rate of convergence.  Key references include \citet{ggv2000}, \citet{shen.wasserman.2001}, \citet{ghosalvaart2001, ghosalvaart2007, ghosalvaart2007a}, and \citet{walker2007}, to name a few.  All of these papers assume that the prior distribution satisfies a local support condition, an extension of the Kullback--Leibler property used in posterior consistency studies \citep[e.g.,][]{ggr1999, bsw1999}.  This local support condition ensures that the prior puts a sufficient amount of mass near the true distribution.  In addition to the local support condition, formal posterior consistency or posterior convergence rate theorems also require that the prior satisfy some global support conditions, though the specific form of these conditions varies from paper to paper.  

In this paper, we focus on convergence rate results that can be obtained assuming only local prior support conditions.  After introducing notation and terminology, we begin in Section~\ref{S:predictive} with an analysis of the behavior of Cesaro averages of Bayesian predictive densities.  Using some very basic argument based on centering, we show that under only local prior support conditions, the Cesaro average convergence rate is arbitrarily close to the optimal rate.  In Section~\ref{S:posterior}, we give conditions such that the posterior probability on sequences of sets which are, in a certain sense, not too close to the true density will vanish.  Such sequences include Hellinger balls not intersecting a collapsing neighborhood of the true density.  Again, we only assume only local support conditions, but since the sets in question are allowed to expand, our Proposition~\ref{prop:rates} strongly suggests a practical posterior convergence rate result.  Section~\ref{S:pseudo} considers a minor modification of the posterior distribution, one that obtains by raising the likelihood to a fractional power before combining with the prior via Bayes theorem.  Our analysis shows that the optimal convergence can be obtained with this so-called pseudo-posterior under only a local prior support condition.  The take-away message from this paper is that, while the existing sufficient conditions for proper Bayesian convergence rate results are somewhat restrictive, desirable results can be established under weaker conditions.  In particular, by removing the global prior support conditions, we show that the posterior distribution is still doing the right things.  Moreover, those somewhat restrictive global prior support conditions, i.e., bounds on metric entropy, etc, often slow down the achievable rate.  We get (near-) optimal rates under minimal conditions using very simple arguments.

\section{Bayesian density estimation}
\label{S:density}

Let $(\YY,\Y)$ be a measurable space, and let $Y_1,\ldots,Y_n$ be independent $\YY$-valued random variables having density $f$ with respect to a $\sigma$-finite measure $\mu$ on $\Y$.  The goal is inference on $f$.  Following the Bayesian approach, let $\FF$ be a subset of all $\mu$-densities $f$, and $\Pi$ a prior distribution supported on $\FF$.  Examples of priors for densities include Dirichlet process mixtures and their variants, Polya trees, Bernstein polynomials, and logistic Gaussian processes.  Then Bayes theorem gives the posterior distribution of $f$, given $Y_1,\ldots,Y_n$:
\begin{equation}
\label{eq:post1}
\Pi_n(A) = \Pi(A \mid Y_1,\ldots,Y_n) = \frac{\int_A \prod_{i=1}^n f(Y_i) \, \Pi(df)}{\int_{\FF} \prod_{i=1}^n f(Y_i) \, \Pi(df)}, \quad A \subseteq \FF.
\end{equation}
The posterior distribution $\Pi_n$ gives a complete probabilistic summary of the information relevant for inference about $f$.  For example, the posterior mean, $\hat f_n = \int f \,\Pi_n(df)$, also known as the predictive density (see Section~\ref{S:predictive}), is a natural estimator of $f$.  

Bayesian convergence results concern the asymptotic behavior of certain functionals of the posterior $\Pi_n$, under the iid $\ftrue$ model, as $n \to \infty$.  Several such results are considered in the upcoming sections, and all can be understood as describing a sense in which the posterior concentrates around $\ftrue$ asymptotically.  However, in order for the posterior $\Pi_n$ to concentrate around $\ftrue$, the prior $\Pi$ should also be sufficiently concentrated around $\ftrue$.  In the posterior consistency literature, the Kullback--Leibler property is the most natural condition \citep{schwartz1965, ggr1999, bsw1999, wu.ghosal.2008, choi.ramamoorthi.2008}.  An obvious extension of the Kullback--Leibler support condition in the more challenging rates problem is as follows.  For a positive vanishing sequence of numbers $\eps_n$, assume that 
\begin{equation}
\label{eq:support1}
\Pi(\{f \in \FF: K(\ftrue, f) \leq \eps_n^2\}) \geq e^{-Cn\eps_n^2}, 
\end{equation}
where $K(\ftrue, f) = \int \log(\ftrue / f) \ftrue \,d\mu$ is the Kullback--Leibler divergence of $f$ from $\ftrue$ and $C > 0$ is a constant.  This will be support condition considered in Section~\ref{S:predictive}, but a stronger condition will be assumed in Sections~\ref{S:posterior} and \ref{S:pseudo}; see, also, \citet{ggv2000}, \citet{shen.wasserman.2001}, and \citet{walker2007}.

\section{Convergence rates for predictive densities}
\label{S:predictive}

Here we investigate the asymptotic behavior of the predictive density $\hat f_n$.  Consistency of $\hat f_n$ was considered by \citet{barron1987, barron1999} and \citet{walker2003, walker2004a}, and here we extend the consistency result to obtain rates of convergence assuming only \eqref{eq:support1}.  

First, we need a bit more notation.  If $\ftrue$ is the true density from which the data $Y_1,\ldots,Y_n$ are observed, it is typical to rewrite the posterior \eqref{eq:post1} as 
\begin{equation}
\label{eq:post2}
\Pi_n(A) = \frac{\int_A R_n(f) \,\Pi(df)}{\int_{\FF} R_n(f) \,\Pi(df)}, \quad A \subseteq \FF, 
\end{equation}
where $R_0(f) \equiv 1$ and $R_n(f) = \prod_{i=1}^n f(Y_i) / \ftrue(Y_i)$, $n \geq 1$.  Write $I_n$ for the denominator in \eqref{eq:post2}.  Then we have the following simple consequence of \eqref{eq:support1}.  

\begin{lemma}
\label{lem:support1}
If $\Pi$ satisfies \eqref{eq:support1}, then $\E(\log I_n) \geq -(C+1) n\eps_n^2$.  
\end{lemma}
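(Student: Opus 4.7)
The plan is to obtain a lower bound on $I_n$ by restricting the integration to a Kullback--Leibler neighborhood where the integrand is controlled, then pass to the logarithm via Jensen's inequality, and finally take expectations over the data. The key object is the set $A_n = \{f \in \FF : K(\ftrue, f) \leq \eps_n^2\}$, which by the support condition \eqref{eq:support1} satisfies $\Pi(A_n) \geq e^{-Cn\eps_n^2}$.

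First I would observe that since $R_n(f) \geq 0$, restricting to $A_n$ gives
\[
I_n \;\geq\; \int_{A_n} R_n(f) \, \Pi(df) \;=\; \Pi(A_n) \int_{A_n} R_n(f) \, \tilde\Pi(df),
\]
where $\tilde\Pi(\cdot) = \Pi(\cdot \cap A_n)/\Pi(A_n)$ is $\Pi$ normalized to $A_n$, which is well defined because $\Pi(A_n) > 0$. Taking $\log$ and using concavity (Jensen's inequality) to push the logarithm inside the integral against the probability measure $\tilde\Pi$ yields
\[
\log I_n \;\geq\; \log \Pi(A_n) + \int_{A_n} \log R_n(f) \, \tilde\Pi(df).
\]

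Next I would take expectations. The first term on the right is deterministic and bounded below by $-Cn\eps_n^2$ directly from \eqref{eq:support1}. For the second term I would swap the order of integration by Fubini so that it becomes $\int_{A_n} \E[\log R_n(f)] \, \tilde\Pi(df)$. The iid assumption on $Y_1,\ldots,Y_n$ under $\ftrue$ gives the routine identity $\E[\log R_n(f)] = -n K(\ftrue, f)$, and then the defining inequality $K(\ftrue, f) \leq \eps_n^2$ on $A_n$ bounds this term below by $-n\eps_n^2$. Adding the two lower bounds yields $\E(\log I_n) \geq -(C+1)n\eps_n^2$.

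I do not anticipate any genuine obstacle: the entire argument boils down to one application of Jensen's inequality and the identity $\E[\log R_n(f)] = -nK(\ftrue,f)$. The only point requiring mild care is justifying the interchange of expectation and the inner $\Pi$-integral, which is routine given that the integrand $K(\ftrue, f)$ is uniformly bounded on $A_n$.
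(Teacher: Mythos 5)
Your proof is correct and follows essentially the same route as the paper's: restrict $I_n$ to the Kullback--Leibler neighborhood, normalize, apply Jensen's inequality, then take expectations and use $\E[\log R_n(f)] = -nK(\ftrue,f)$ together with \eqref{eq:support1}. No substantive differences.
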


\begin{proof}
Let $K_n = \{f: K(\ftrue,f) \leq \eps_n^2\}$ and $\Pi^{K_n}$ the version of $\Pi$ restricted and normalized on $K_n$.  Lower bound $I_n$ by $\Pi(K_n) \int_{K_n} R_n(f) \, \Pi^{K_n}(df)$, which is valid since $R_n(f)$ is non-negative.  Take a log and use Jensen's inequality to get 
\[ \log I_n \geq \log \Pi(K_n) + \int_{K_n} \log R_n(f) \, \Pi^{K_n}(df). \]
Now take expectation with respect to $\ftrue$ and apply Fubini's theorem to get 
\[ \E(\log I_n) \geq \log \Pi(K_n) - n\int_{K_n} K(\ftrue,f) \,\Pi^{K_n}(df). \]
The first term is $\geq -Cn\eps_n^2$ by assumption, and the second term is $\geq n\eps_n^2$ by the construction of $K_n$.  Therefore, $\E(\log I_n) \geq -(C+1)n\eps_n^2$, proving the claim.  
\end{proof}

For convergence of the predictive density $\fhat_n$, the key observation is that 
\[ I_i / I_{i-1} = \fhat_{i-1}(Y_i) / \ftrue(Y_i), \quad i \geq 1 \qquad [I_0 \equiv 1]. \] 
If $\Y_i$ is the $\sigma$-algebra generated by the data $Y_1,\ldots,Y_i$, then 
\[ \E\{ \log(I_i / I_{i-1}) \mid \Y_{i-1} \} = -K(\ftrue, \fhat_{i-1}). \]
Then $X_i := \log(I_i / I_{i-1}) + K(\ftrue,\fhat_{i-1})$ forms a martingale difference sequence and, in particular $\E(X_i) = 0$ for all $i \geq 1$.  Then we have the following elementary result.

\begin{proposition}
\label{prop:pred.rates}
Suppose $\Pi$ satisfies \eqref{eq:support1} for a constant $C > 0$ and sequence $\eps_n$ with $\eps_n \to 0$ and $n\eps_n^2 \to \infty$.  Then $n^{-1} \sum_{i=1}^n \E\{ K(\ftrue, \fhat_{i-1}) \} \leq (C+1)\eps_n^2$ and, furthermore, if $\bar f_n = n^{-1} \sum_{i=1}^n \fhat_{i-1}$, then $\E\{K(\ftrue, \bar f_n)\} \leq (C+1)\eps_n^2$.  
\end{proposition}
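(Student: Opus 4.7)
The plan is to combine the martingale-difference identity for $X_i = \log(I_i/I_{i-1}) + K(\ftrue,\fhat_{i-1})$ with the lower bound on $\E(\log I_n)$ from Lemma~\ref{lem:support1}, and then invoke convexity of $K$ in its second argument to pass from the Cesaro average of Kullback--Leibler divergences to a single divergence against $\bar f_n$.

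First I would sum the telescoping identity $\log(I_i/I_{i-1}) = X_i - K(\ftrue,\fhat_{i-1})$ from $i=1$ to $n$. Since $I_0 \equiv 1$, this gives
\[
\log I_n = \sum_{i=1}^n X_i - \sum_{i=1}^n K(\ftrue,\fhat_{i-1}).
\]
Taking expectation and using $\E(X_i) = 0$ for each $i$ (which is exactly the martingale-difference property already established in the paragraph preceding the proposition) yields
\[
\E(\log I_n) = -\sum_{i=1}^n \E\{K(\ftrue,\fhat_{i-1})\}.
\]
Combining with the inequality $\E(\log I_n) \geq -(C+1)n\eps_n^2$ from Lemma~\ref{lem:support1} and dividing by $n$ gives the first stated bound.

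For the second bound, I would invoke the convexity of $q \mapsto K(p,q)$ for fixed $p$, which is immediate from Jensen's inequality applied to $-\log$ inside the integral defining the Kullback--Leibler divergence. Applied to $\bar f_n = n^{-1}\sum_{i=1}^n \fhat_{i-1}$, this gives $K(\ftrue,\bar f_n) \leq n^{-1}\sum_{i=1}^n K(\ftrue,\fhat_{i-1})$, and taking expectation on both sides and reusing the first bound finishes the proof.

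There is no real obstacle here: the only place requiring care is checking that $X_i$ is genuinely integrable so that $\E(X_i) = 0$ is well defined (hence the need for $\E(\log I_n)$ to be finite, which is implicit in the Lemma~\ref{lem:support1} lower bound), and that convexity of $K$ in the second slot is being used rather than joint convexity. Both are standard, so the proposition essentially falls out in a few lines once the martingale-difference identity and the support lemma are in hand.
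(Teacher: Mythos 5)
Your proposal is correct and follows essentially the same route as the paper: sum the martingale-difference identity so that $0=\E(X_1+\cdots+X_n)=\E(\log I_n)+\sum_{i=1}^n \E\{K(\ftrue,\fhat_{i-1})\}$, apply Lemma~\ref{lem:support1}, and then use convexity of $K$ in its second argument for the $\bar f_n$ part. Your added remark about integrability of $X_i$ is a reasonable point of care but does not change the argument.
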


\begin{proof}
By construction, 
\[ 0 = \E(X_1+\cdots+X_n) = \E(\log I_n) + \sum_{i=1}^n \E\{K(\ftrue, \fhat_{i-1})\}. \]
Since $\E(\log I_n) \geq -(C+1)n\eps_n^2$ by Lemma~\ref{lem:support1}, the right-most term must be $\leq (C+1)n\eps_n^2$.  The part with $\bar f_n$ follows from this and convexity of $K$.  
\end{proof}

By Markov's inequality, the in-probability rate for $K(\ftrue, \bar f_n) \to 0$ is arbitrarily close to $\eps_n^2$.  Next, let $H$ denote the Hellinger distance on $\FF$, given by $H(f,g)^2 = \int (f^{1/2}-g^{1/2})^2 \,d\mu$, and write $h=H^2/2$.  Then the same in-probability rate holds for $h(\ftrue,\bar f_n) \to 0$, so the Hellinger convergence rate of $\bar f_n$ to $\ftrue$ is arbitrarily close to $\eps_n$.  

One might ask if the Hellinger rate of convergence for $\bar f_n$ in Proposition~\ref{prop:pred.rates} extends to the predictive density $\fhat_n$ itself.  A precise result is difficult, but the following heuristics suggest that $\fhat_n$ cannot have a different asymptotic behavior than $\bar f_n$ except under extraordinary circumstances.  Consider a generic positive sequence of numbers $a_n$ such that $n^{-1}\sum_{i=1}^n a_i \to 0$ but $a_n \not\to 0$.  This implies that the $a_n$ sequence must be generally decreasing to zero but have some regularly occurring and significant jumps.  In our case, it would be virtually impossible, especially without knowledge of $\ftrue$, to construct a prior $\Pi$ such that $h(\ftrue, \fhat_n)$ could behave in this unusual way on sets with large probability.  So, based on this understanding, we feel safe extending the rate result to $\fhat_n$.  

The main point of Proposition~\ref{prop:pred.rates} is that the Bayesian can has access to a consistent density estimate ($\bar f_n$ or $\fhat_n$) under only local support conditions, and the rate of convergence is determined by only the prior concentration in \eqref{eq:support1}.  Existing posterior convergence rate results require global support conditions and, in general, yield slower convergence rates; see, e.g., Theorem~2.1 in \citet{ghosalvaart2001} and their Dirichlet process mixture prior application.


\section{Posterior behavior away from $\ftrue$}
\label{S:posterior}

This section explores the behavior of the posterior distribution for sequences of sets $A_n$ in $\FF$ that do not get too close to $\ftrue$.  For this, we require a stronger version of \eqref{eq:support1}.  Let $V(\ftrue,f) = \int \{\log(\ftrue/f)\}^2 \ftrue \,d\mu$ and, for a sequence $\eps_n$ as before, consider 
\begin{equation}
\label{eq:support2}
\Pi(\{f: K(\ftrue,f) \leq \eps_n^2,\, V(\ftrue,f) \leq \eps_n^2\}) \geq e^{-Cn\eps_n^2}.
\end{equation}
This is clearly a stronger condition on $\Pi$ than \eqref{eq:support1}.  The following lemma, an analogue to Lemma~\ref{lem:support1} above, gives an in-probability bound on the denominator $I_n$ in \eqref{eq:post2}; see \citet[][Lemma~8.1]{ggv2000} for a proof.  Here, and in what follows, a statement ``$U_n \leq V_n$ in probability'' means that $U_n \leq V_n$ with probability approaching~1 as $n \to \infty$.  

\begin{lemma}
\label{lem:support2}
Let $I_n = \int R_n(f) \,\Pi(df)$ be the denominator in \eqref{eq:post2}.  If $\Pi$ satisfies \eqref{eq:support2}, then $I_n \geq e^{-cn\eps_n^2}$ in probability for any $c > C+1$.  
\end{lemma}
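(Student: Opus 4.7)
The plan is to run essentially the same restriction-plus-Jensen trick as in Lemma~\ref{lem:support1}, but now use the extra control on $V(\ftrue,f)$ to upgrade the expectation bound to an in-probability bound via a second-moment argument.

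First, I would let $B_n = \{f : K(\ftrue,f) \leq \eps_n^2,\ V(\ftrue,f) \leq \eps_n^2\}$ and $\Pi^{B_n}$ be $\Pi$ restricted and normalized to $B_n$, so that $\Pi(B_n) \geq e^{-Cn\eps_n^2}$ by \eqref{eq:support2}. Lower bounding the integral $I_n$ by the piece over $B_n$ and applying Jensen's inequality to the log gives
\[
\log I_n \;\geq\; \log \Pi(B_n) \,+\, \int_{B_n} \log R_n(f)\, \Pi^{B_n}(df) \;=\; \log \Pi(B_n) + Z_n,
\]
where $Z_n = \sum_{i=1}^n W_i$ with $W_i = \int_{B_n} \log\{f(Y_i)/\ftrue(Y_i)\}\, \Pi^{B_n}(df)$. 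The $W_i$ are iid under $\ftrue$, and Fubini gives $\E(W_i) = -\int_{B_n} K(\ftrue,f)\,\Pi^{B_n}(df) \geq -\eps_n^2$, so $\E(Z_n) \geq -n\eps_n^2$.

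Next I would bound $\var(Z_n) = n\,\var(W_1)$. Writing $g_f(y) = \log\{f(y)/\ftrue(y)\}$, Jensen's inequality in the prior direction gives $W_1^2 \leq \int_{B_n} g_f(Y_1)^2\, \Pi^{B_n}(df)$, so
\[
\var(W_1) \leq \E(W_1^2) \leq \int_{B_n} \E\{g_f(Y_1)^2\}\, \Pi^{B_n}(df) = \int_{B_n} V(\ftrue,f)\, \Pi^{B_n}(df) \leq \eps_n^2.
\]
Hence $\var(Z_n) \leq n\eps_n^2$. Chebyshev's inequality then yields $Z_n \geq -n\eps_n^2 - \sqrt{n\eps_n^2}/\delta_n$ with probability $\geq 1-\delta_n^2$ for any $\delta_n \to 0$; since $n\eps_n^2 \to \infty$, the correction $\sqrt{n\eps_n^2}$ is $o(n\eps_n^2)$, so $Z_n \geq -(1+o(1))\,n\eps_n^2$ in probability.

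Combining the two pieces gives $\log I_n \geq -Cn\eps_n^2 - (1+o(1))n\eps_n^2 = -(C+1+o(1))n\eps_n^2$ in probability, which yields $I_n \geq e^{-cn\eps_n^2}$ in probability for any fixed $c > C+1$. The only non-bookkeeping step is the variance bound, and there the main subtlety is applying Jensen in the prior integral and Fubini in the correct order to trade $g_f^2$ for $V(\ftrue,f)$; this is precisely the role the extra $V$-control in \eqref{eq:support2} plays over the $K$-only condition used in Lemma~\ref{lem:support1}.
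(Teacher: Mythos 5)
Your argument is correct and is essentially the proof the paper relies on: the lemma is not proved in the text but is cited from Ghosal, Ghosh and van der Vaart (2000, Lemma~8.1), whose proof is exactly this restrict-to-$B_n$, Jensen, and Chebyshev second-moment argument, with the $V$-control in \eqref{eq:support2} supplying the variance bound. The only nit is the quantifier on $\delta_n$: you need $\delta_n \to 0$ slowly enough that $\sqrt{n\eps_n^2}/\delta_n = o(n\eps_n^2)$ (e.g., $\delta_n = (n\eps_n^2)^{-1/4}$), or more simply apply Chebyshev directly at the fixed threshold $(c-C-1)n\eps_n^2$ for the given $c > C+1$; either way the conclusion stands.
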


Let $\fhat_i^{A_n}$ denotes the predictive distribution of $Y_{i+1}$, given $Y_1,\ldots,Y_i$, $i=1,\ldots,n$, when $\Pi_i$ is restricted and normalized to $A_n$.   Let $L_{n,i} = \int_{A_n} R_i(f)\,\Pi(df)$ be the numerator of $\Pi_i(A_n)$ in \eqref{eq:post2}, $i=1,\ldots,n$.  Then it is clear that 
\[ L_{n,i} \,/\, L_{n,i-1} = \fhat_{i-1}^{A_n}(Y_i) \,/\, \ftrue(Y_i), \quad i=1,\ldots,n, \quad [L_{n,0} \equiv \Pi(A_n)]. \]
It is easy to check that $\E\{ (L_{n,i} / L_{n,i-1})^{1/2}-1 \mid \Y_{i-1}\} = -h(\ftrue, \fhat_{i-1}^{A_n})$.  Therefore, $X_{n,i} = (L_{n,i} / L_{n,i-1})^{1/2}-1 + h(\ftrue, \fhat_{i-1}^{A_n})$ forms a martingale difference array.  This martingale representation allows us to prove the following result.  

\begin{proposition}
\label{prop:rates}
For given $\eps_n$, with $\eps_n \to 0$ and $n\eps_n^2 \to \infty$, and $C > 0$, assume that $\Pi$ satisfies \eqref{eq:support2}.  If, for some $\beta \in (0,1/2)$ and $D > (C+1)/2$, 
\begin{equation}
\label{eq:condition1}
\frac1n \sum_{i=1}^n h(\ftrue, \fhat_{i-1}^{A_n}) \geq Dn^{-\beta}, \quad \text{in probability},
\end{equation}
then $\Pi_n(A_n) \leq \Pi(A_n) e^{-\kappa n\delta_n^2}$, in probability, for some $\kappa > 0$, where $\delta_n^2 = n^{-\beta} \wedge \eps_n^2$. 
\end{proposition}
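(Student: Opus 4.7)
The plan is to bound $\Pi_n(A_n) = L_{n,n}/I_n$ by treating numerator and denominator separately: the numerator $L_{n,n}$ through the martingale representation sketched just before the proposition, and the denominator $I_n$ through Lemma~\ref{lem:support2}.

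Writing $\xi_i = (L_{n,i}/L_{n,i-1})^{1/2}$ and $h_i = h(\ftrue, \fhat_{i-1}^{A_n})$, the identity $\E[\xi_i \mid \Y_{i-1}] = 1 - h_i$ noted in the text shows that, provided $h_i < 1$, the process $M_k := \prod_{i=1}^k \xi_i/(1-h_i)$ is a positive martingale with $M_0 = 1$ and $\E M_n = 1$. Telescoping together with $1-x \leq e^{-x}$ yields
\[
L_{n,n} = \Pi(A_n)\prod_{i=1}^n \xi_i^2 = \Pi(A_n)\,M_n^2\prod_{i=1}^n (1-h_i)^2 \leq \Pi(A_n)\,M_n^2\,\exp\!\Bigl(-2\sum_{i=1}^n h_i\Bigr).
\]
Hypothesis \eqref{eq:condition1} yields $\sum_{i=1}^n h_i \geq D n^{1-\beta}$ in probability, while Markov applied to the unit-mean $M_n$ gives, for any constant $a > 0$, $\prob(M_n^2 > e^{a n^{1-\beta}}) \leq e^{-a n^{1-\beta}/2} \to 0$. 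Hence in probability,
\[
L_{n,n} \leq \Pi(A_n)\,\exp\!\bigl(-(2D - a)\,n^{1-\beta}\bigr).
\]

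Invoking Lemma~\ref{lem:support2} to get $I_n \geq e^{-c n\eps_n^2}$ in probability for any $c > C+1$,
\[
\Pi_n(A_n) \leq \Pi(A_n)\,\exp\!\bigl(-(2D - a)\,n^{1-\beta} + c n\eps_n^2\bigr) \quad \text{in probability}.
\]
Because $D > (C+1)/2$, one can pick $c$ just above $C+1$ so that $2D > c$, and then $a > 0$ small. A short case split on whether $n\eps_n^2 \leq n^{1-\beta}$ (so $\delta_n^2 = \eps_n^2$) or $n\eps_n^2 > n^{1-\beta}$ (so $\delta_n^2 = n^{-\beta}$) then bounds the exponent above by $-\kappa n\delta_n^2$ for a positive constant $\kappa = \kappa(C, D, \beta)$.

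The main obstacle is juggling the constants $(a, c, \kappa)$ so that the decay rate is nontrivial across both regimes of the $\wedge$ simultaneously; the hypothesis $D > (C+1)/2$ is exactly what leaves room to choose $c > C+1$ with $2D - c > 0$, leaving a strictly positive $\kappa$. A minor technical point is that we rely only on the crude Markov bound for $M_n^2$—a positive unit-mean martingale need not be $L^2$-bounded—but since only $M_n^2 \leq e^{a n^{1-\beta}}$ in probability for an arbitrarily small $a$ is needed, Markov suffices.
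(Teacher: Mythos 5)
Your argument is correct and arrives at the same key intermediate bound as the paper, $L_{n,n} \le \Pi(A_n)e^{-(2D-a)n^{1-\beta}}$ in probability, but by a genuinely different route. The paper works additively: it uses the martingale difference array $X_{n,i} = (L_{n,i}/L_{n,i-1})^{1/2} - 1 + h(\ftrue,\fhat_{i-1}^{A_n})$, bounds $\E(X_{n,i}^2) \le 2$, applies Chebyshev to $M_{n,n}/\omega_n$ with $\omega_n = n^{1-\beta}$ (this is exactly where $\beta < 1/2$ enters), and then converts back to a statement about $\log(L_{n,n}/L_{n,0})$ via $\log x \le x - 1$. You instead exponentiate from the start: the mean-one positive martingale $M_k = \prod_{i \le k}\xi_i/(1-h_i)$ together with $(1-h)^2 \le e^{-2h}$ gives the product bound directly, and plain Markov on $M_n$ yields an exceptional probability $e^{-an^{1-\beta}/2}$. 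What your route buys is an exponentially small, rather than polynomially vanishing, exceptional probability, and it does not actually use $\beta < 1/2$ (any $\beta \in (0,1)$ would do for the numerator step); what the paper's route buys is that it never divides by $1-h_i$ and reuses the same second-moment computation that appears elsewhere in the paper. Two small points. First, handle the event $h_i = 1$ explicitly: since $\E(\xi_i \mid \Y_{i-1}) = 1 - h_i$ and $h_i$ is $\Y_{i-1}$-measurable, on that event $\xi_i = 0$ almost surely, so $L_{n,n} = 0$ and the claimed bound is trivial there; your martingale need only be defined on the complement. Second, your closing ``case split'' is only fully justified when $\eps_n^2 \le n^{-\beta}$ (so $\delta_n^2 = \eps_n^2$); in the other regime, $cn\eps_n^2$ can dominate $(2D-a)n^{1-\beta}$ and the exponent cannot be bounded by $-\kappa n\delta_n^2$ from these estimates alone. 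The paper's final display glosses over precisely the same issue, so this is not a defect of your approach relative to the paper's, but you should not present the second case as though it followed.
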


\begin{proof}
For $X_{n,i}$ defined above, and $\Y_{i-1}$ defined in Section~\ref{S:predictive}, the key result is 
\[ \E(X_{n,i} \mid \Y_{i-1}) \leq \int \{ (\fhat_{i-1}^{A_n} / \ftrue)^{1/2} - 1\}^2 \ftrue \,d\mu \leq 2 h(\ftrue, \fhat_{i-1}^{A_n}) \leq 2. \]
Then $M_{n,n} = \sum_{i=1}^n X_{n,i}$ is a square-integrable martingale, with 
\[ \E(M_{n,n}^2) = \sum_{i=1}^n \E(X_{n,i}^2) \leq 2n. \]
Let $\omega_n=n^{1-\beta}$ for the $\beta$ in the statement above.  It follows from Markov's inequality that $M_{n,n}/\omega_n \leq d$ in probability for any $d > 0$.  Moreover, from \eqref{eq:condition1}, we can conclude that 
\[ \frac{M_{n,n}}{\omega_n} \geq \frac{1}{\omega_n} \sum_{i=1}^n \Bigl\{ \Bigl( \frac{L_{n,i}}{L_{n,i-1}} \Bigr)^{1/2}-1\Bigr\} + D, \quad \text{in probability}. \]
Rearranging this inequality, and using the fact that arithmetic means are no smaller than geometric means, and the inequality $\log x \leq x-1$, we get
\[ \frac{1}{2\omega_n}\log \frac{L_{n,n}}{L_{n,0}} \leq \frac{M_{n,n}}{\omega_n} - D. \]
Since $M_{n,n}/\omega_n \leq d$ in probability, for any $d > 0$, we get $L_{n,n} \leq \Pi(A_n)e^{-2(D-d)\omega_n}$ in probability.  Also, from Lemma~\ref{lem:support2}, $I_n \geq e^{-cn\eps_n^2}$ in probability for any $c \in (C+1, 2D)$.  Therefore, 
\[ \Pi_n(A_n) = \frac{L_{n,n}}{I_n} \leq \Pi(A_n) e^{-(2D-2d-c)n\delta_n^2}, \quad \text{in probability}. \]
To complete the proof, take $d$ small enough that $\kappa = 2D-2d-c$ is positive.  
\end{proof}

Often, $\eps_n^2$ will be smaller than $n^{-1/2}$, e.g., \citet{ghosalvaart2001} get $\eps_n^2=(\log n)^2n^{-1}$ in their Dirichlet process mixture setting, in which case $\delta_n^2$ from Proposition~\ref{prop:rates} is exactly $\eps_n^2$.  The boundary, where $\delta_n^2$ switches between $\eps_n^2$ and $n^{-\beta}$, is the case $\eps_n=n^{-1/4}$, which appears, for example, in the context of estimating a smooth density with a log-Brownian motion-type prior \citep{vaart.zanten.2008, castillo2008}.   

The result in Proposition~\ref{prop:rates} applies for sets $A_n$ that do not get too close to $\ftrue$.  Hellinger balls $A_n$ with suitable center and radii would satisfy \eqref{eq:condition1}, but it would certainly hold for other sequences $A_n$.  Except for prior support conditions, the only requirement is that the mean of the posterior $\Pi_n^{A_n}$, restricted and normalized to $A_n$, does not agree with $\ftrue$.  We cannot imagine a reasonable prior, i.e., one without knowledge of $\ftrue$, and a sequence of Hellinger balls $A_n$, sufficiently separated from $\ftrue$, for which \eqref{eq:condition1} might fail. 

As an example, take $A_n \equiv A$ fixed.  Then $\Pi_n(A) \to 0$ if $n^{-(1-\beta)} \sum_{i=1}^n h(\ftrue, \fhat_{i-1}^A)$ is bounded away from zero.  \citet{walker2003} reaches the same conclusion based on the assumption that $h(\ftrue, \fhat_n^A)$ is bounded away from zero.  Since $\beta > 0$, our condition is weaker than Walker's, meaning that $\Pi_n(A) \to 0$ for a wider class of sets $A$. 

It is straightforward to extend Proposition~\ref{prop:rates} to a finite collection of sequences, say, $(A_{nj})$, where $n \geq 1$ and $j=1,\ldots,J$ for fixed finite $J$.  In that case, 
\[ \Pi_n(A_{n1} \cup \cdots \cup A_{nJ}) \leq \sum_{j=1}^J \Pi_n(A_{nj}) \to 0 \quad \text{in probability}. \]
Suppose that the $A_{nj}$'s are Hellinger balls with radius increasing with $n$ and center $f_{nj}$ moving away from $\ftrue$ in such a way that \eqref{eq:condition1} holds for each $j=1,\ldots,J$.  If we take $J$ to be very large, then, in some sense, the union $A_{n1} \cup \cdots \cup A_{nJ}$ of these expanding balls almost fills up the space outside the collapsing neighborhood of $\ftrue$, suggesting that the posterior is concentrating on a Hellinger ball at $\ftrue$ of radius proportional to $\eps_n$.  


\section{Pseudo-posterior convergence rates}
\label{S:pseudo}

The results of the previous two sections are simple and provide some useful insight, but they fall short of giving a formal posterior convergence rate theorem.  However, if a formal convergence rate theorem is the goal, then it can be easily obtained with a slight modification to the construction of the posterior; this modified posterior shall be called a pseudo-posterior.  This technique was first introduced in \citet{walker.hjort.2001}, where pseudo-posterior consistency results were readily obtained.  To our knowledge, pseudo-posterior convergence rates have not been considered in general.  

Specifically, consider a posterior obtained by Bayes theorem based on a one-half fractional power of the likelihood.  That is, let 
\begin{equation}
\label{eq:pseudo}
\tilde \Pi_n(A) = \tilde \Pi(A \mid Y_1,\ldots,Y_n) \propto \int_A L_n(f)^{1/2} \,\Pi(df), \quad A \subseteq \FF, 
\end{equation}
where $L_n(f) = \prod_{i=1}^n f(Y_i)$ is the likelihood.  So, the only difference between $\tilde\Pi_n$ and $\Pi_n$ is the one-half fraction power on the likelihood.  That the pseudo-posterior is proper whenever the corresponding posterior is proper is a simple consequence of Jensen's inequality.  Computation of the pseudo-posterior in the context of density estimation with Dirichlet process mixtures was addressed in \citet{walker.fractional.2013} and other applications are presented in \citet{scott.shively.walker.2013} and \citet{martin.walker.eb}.  

An alternative way to interpret the pseudo-posterior is as an empirical Bayes posterior.  That is, the same pseudo-posterior obtains if one uses the regular likelihood but replaces the prior $\Pi$ with the data-dependent measure $\Gamma_n$ with density $\Gamma_n(df) = L_n(f)^{-1/2} \,\Pi(df)$.  Since the pseudo-posterior also corresponds to a genuine posterior but with a data-dependent prior, it can also be interpreted as an empirical Bayes posterior.  Intuitively, bad posterior behavior occurs when the prior assigns too much weight to densities $f$ that track data too closely \citep{walker.lijoi.prunster.2005a}.  Such densities have high likelihood and, consequently, lower empirical Bayes prior mass, so these bad densities have less of an effect on the behavior of the pseudo-posterior.  

Let $I_n = \int R_n(f)^{1/2} \,\Pi(df)$ be the denominator of the pseudo-posterior.  If $\Pi$ satisfies the support condition \eqref{eq:support2}, then there is a lower bound result for $I_n$, analogous to that in Lemma~\ref{lem:support2}, i.e., $I_n \geq e^{-cn\eps_n^2}$ in probability for all $c > (C+1)/2$.  

\begin{proposition}
\label{prop:pseudo}
Given $\eps_n$ such that $\eps_n \to 0$ and $n\eps_n^2 \to \infty$, assume that the prior $\Pi$ satisfies the support condition \eqref{eq:support2}.  Let $A_n = \{f \in \FF: H(\ftrue, f) > M\eps_n\}$, where $M^2 > (C+1)/2$.  Then the pseudo-posterior satisfies $\tilde \Pi_n(A_n) \to 0$ in probability.  
\end{proposition}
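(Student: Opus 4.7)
The plan is to decompose $\tilde\Pi_n(A_n) = N_n/I_n$, where $N_n = \int_{A_n} R_n(f)^{1/2}\,\Pi(df)$ and $I_n = \int_{\FF} R_n(f)^{1/2}\,\Pi(df)$, and handle numerator and denominator separately. For $I_n$, the in-probability lower bound $I_n \geq e^{-cn\eps_n^2}$ for any $c > (C+1)/2$ is stated immediately before the proposition as the pseudo-posterior analogue of Lemma~\ref{lem:support2}, so I would simply invoke it. The whole task therefore reduces to producing a matching exponentially small in-probability upper bound on $N_n$.

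For $N_n$, the key point is that raising the likelihood ratio to the power $1/2$ turns an expectation into a product of Hellinger affinities: by independence of the $Y_i$ under $\ftrue$,
\[
\E R_n(f)^{1/2} \;=\; \Bigl(\int \sqrt{f\ftrue}\,d\mu\Bigr)^n \;=\; \bigl(1 - h(\ftrue,f)\bigr)^n \;\leq\; e^{-n\,h(\ftrue,f)}.
\]
On $A_n$, the definition of the set gives $h(\ftrue,f) > M^2\eps_n^2/2$, so Fubini applied to the definition of $N_n$ yields
\[
\E N_n \;=\; \int_{A_n} \E R_n(f)^{1/2}\,\Pi(df) \;\leq\; \Pi(A_n)\,e^{-M^2 n\eps_n^2/2} \;\leq\; e^{-M^2 n\eps_n^2/2}.
\]
Markov's inequality, applied at level $e^{-(M^2/2-\eta)n\eps_n^2}$ for any fixed $\eta>0$, then yields $N_n \leq e^{-(M^2/2-\eta)n\eps_n^2}$ with probability tending to $1$, since the probability of the complementary event is bounded by $e^{-\eta n\eps_n^2} \to 0$.

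Combining the two bounds gives, in probability,
\[
\tilde\Pi_n(A_n) \;\leq\; e^{-(M^2/2 - \eta - c)n\eps_n^2}.
\]
Choosing $c$ just above $(C+1)/2$ and $\eta$ sufficiently small makes the coefficient $M^2/2 - \eta - c$ strictly positive under the hypothesis on $M$, and since $n\eps_n^2 \to \infty$ the right-hand side converges to zero. I do not anticipate any real obstacle: the only observation that requires a moment's thought is the Hellinger-affinity identity for $\E R_n^{1/2}$, after which the argument is a one-line Markov-plus-Fubini calculation combined with the pre-stated lower bound on $I_n$. This is precisely the advantage of the pseudo-posterior construction — the explicit moment control offered by the square root of the likelihood sidesteps the metric-entropy conditions normally required to bound the ordinary posterior $\Pi_n$.
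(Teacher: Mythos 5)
Your route is the same as the paper's: write $\tilde\Pi_n(A_n)$ as a ratio, bound the expected numerator via Fubini and the Hellinger affinity $\E R_n(f)^{1/2} = \bigl(\int\sqrt{f\ftrue}\,d\mu\bigr)^n$, apply Markov, and invoke the pre-stated lower bound $I_n \geq e^{-cn\eps_n^2}$, $c>(C+1)/2$, for the denominator. The paper's proof (following Theorem~1 of \citet{walker.hjort.2001}) does exactly this, asserting $\E(U_n) \leq e^{-M^2 n\eps_n^2}$ for the numerator.

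There is, however, a genuine problem in your final constants check. With the paper's own convention $H(f,g)^2 = \int(f^{1/2}-g^{1/2})^2\,d\mu$ and $h=H^2/2$, your affinity computation is the correct one: on $A_n$ one has $h(\ftrue,f) > M^2\eps_n^2/2$, hence $\E N_n \leq e^{-M^2 n\eps_n^2/2}$, and after Markov and division by $I_n$ the exponent is $M^2/2-\eta-c$ with $c>(C+1)/2$. Positivity of that exponent requires $M^2 > C+1$, which is strictly stronger than the stated hypothesis $M^2 > (C+1)/2$; so your closing claim that the coefficient is positive ``under the hypothesis on $M$'' does not hold as written (take, e.g., $M^2 = 0.6(C+1)$). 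The mismatch is a factor of $2$ in the numerator exponent: the paper's bound $\E(U_n)\leq e^{-M^2 n\eps_n^2}$ corresponds to the normalization $H(f,g)^2=\tfrac12\int(f^{1/2}-g^{1/2})^2\,d\mu$ (affinity $=1-H^2$), not the one defined in Section~\ref{S:predictive} of this paper. So your argument, carried out consistently with the paper's definitions, actually proves the proposition only under $M^2 > C+1$ (equivalently, it proves the stated threshold if $A_n$ is defined through $h$ rather than $H$); to recover the stated constant you must either adopt the other Hellinger normalization or flag the factor-of-two discrepancy rather than assert the stated hypothesis suffices.
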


\begin{proof}
Let $U_n = \int_{A_n} R_n(f)^{1/2} \, \Pi(df)$ be the numerator of the pseudo-posterior probability of $A_n$.  As in the proof of Theorem~1 in \citet{walker.hjort.2001}, it is easy to check that $\E(U_n) \leq e^{-M^2n\eps_n^2}$ and, therefore, $U_n \leq e^{-Kn\eps_n^2}$ in probability for all $K < M^2$.  Choose $c$ such that $(C+1)/2 < c < K < M^2$, then $I_n \geq e^{-cn\eps_n^2}$ in probability and, consequently, 
\[ \tilde \Pi_n(A_n) = U_n/I_n \leq e^{-(K-c)n\eps_n^2} \quad \text{in probability}. \]
Since $K > c$, the desired result follows.  
\end{proof}

The key point here is that if getting good posterior convergence rates is the goal, then it can be done very easily with a pseudo-posterior based on any prior that satisfies the support condition \eqref{eq:support2}.  In fact, in some cases, the pseudo-posterior rates, which are determined only by the local prior concentration around $\ftrue$, are faster than the known rates for the genuine posterior.  One example is the Dirichlet process mixture prior in \citet{ghosalvaart2001}, mentioned earlier, and another is the Bernstein polynomial prior in \citet{ghosal2001}: in both cases, the pseudo-posterior rate will be faster by a logarithmic factor.  

It can also be shown that the result in Proposition~\ref{prop:pseudo} hold for any fraction power $\kappa \in (0,1)$, not just for $\kappa=1/2$.  Therefore, by taking $\kappa$ arbitrarily close to 1, the corresponding pseudo-posterior cannot differ too much from the genuine posterior in finite-sample applications and, moreover, the former enjoys an asymptotic convergence rate result under only a local prior support condition, while the latter generally does not.  Of course, from Proposition~\ref{prop:pseudo}, one can also derive Hellinger convergence rate results for the pseudo-posterior mean density as in \citet{ggv2000}, among other things.

\bibliographystyle{apalike}
\bibliography{/Users/rgmartin/Dropbox/Research/mybib}

\begin{thebibliography}{}

\bibitem[\protect\astroncite{Antoniano-Villalobos and
  Walker}{2013}]{walker.fractional.2013}
Antoniano-Villalobos, I. and Walker, S.~G. (2013).
\newblock Bayesian nonparametric inference for the power likelihood.
\newblock {\it J. Comput. Graph. Statist.}, to appear.

\bibitem[\protect\astroncite{Barron}{1987}]{barron1987}
Barron, A. (1987).
\newblock Are {B}ayes rules consistent in information?
\newblock In Cover, T.~M. and Gopinath, B., editors, {\em Open Problems in
  Communications and Computation}, pages 85--91. Springer--Verlag, New York.

\bibitem[\protect\astroncite{Barron et~al.}{1999}]{bsw1999}
Barron, A., Schervish, M.~J., and Wasserman, L. (1999).
\newblock The consistency of posterior distributions in nonparametric problems.
\newblock {\em Ann. Statist.}, 27(2):536--561.

\bibitem[\protect\astroncite{Barron}{1999}]{barron1999}
Barron, A.~R. (1999).
\newblock Information-theoretic characterization of {B}ayes performance and the
  choice of priors in parametric and nonparametric problems.
\newblock In {\em Bayesian statistics, 6 ({A}lcoceber, 1998)}, pages 27--52.
  Oxford Univ. Press, New York.

\bibitem[\protect\astroncite{Castillo}{2008}]{castillo2008}
Castillo, I. (2008).
\newblock Lower bounds for posterior rates with {G}aussian process priors.
\newblock {\em Electron. J. Stat.}, 2:1281--1299.

\bibitem[\protect\astroncite{Choi and
  Ramamoorthi}{2008}]{choi.ramamoorthi.2008}
Choi, T. and Ramamoorthi, R.~V. (2008).
\newblock Remarks on consistency of posterior distributions.
\newblock In {\em Pushing the Limits of Contemporary Statistics: Contributions
  in Honor of {J}ayanta {K}. {G}hosh}, volume~3 of {\em Inst. Math. Stat.
  Collect.}, pages 170--186. Inst. Math. Statist., Beachwood, OH.

\bibitem[\protect\astroncite{Ghosal}{2001}]{ghosal2001}
Ghosal, S. (2001).
\newblock Convergence rates for density estimation with {B}ernstein
  polynomials.
\newblock {\em Ann. Statist.}, 29(5):1264--1280.

\bibitem[\protect\astroncite{Ghosal et~al.}{1999}]{ggr1999}
Ghosal, S., Ghosh, J.~K., and Ramamoorthi, R.~V. (1999).
\newblock Posterior consistency of {D}irichlet mixtures in density estimation.
\newblock {\em Ann. Statist.}, 27(1):143--158.

\bibitem[\protect\astroncite{Ghosal et~al.}{2000}]{ggv2000}
Ghosal, S., Ghosh, J.~K., and van~der Vaart, A.~W. (2000).
\newblock Convergence rates of posterior distributions.
\newblock {\em Ann. Statist.}, 28(2):500--531.

\bibitem[\protect\astroncite{Ghosal and van~der
  Vaart}{2007a}]{ghosalvaart2007a}
Ghosal, S. and van~der Vaart, A. (2007a).
\newblock Convergence rates of posterior distributions for non-i.i.d.
  observations.
\newblock {\em Ann. Statist.}, 35(1):192--223.

\bibitem[\protect\astroncite{Ghosal and van~der Vaart}{2001}]{ghosalvaart2001}
Ghosal, S. and van~der Vaart, A.~W. (2001).
\newblock Entropies and rates of convergence for maximum likelihood and {B}ayes
  estimation for mixtures of normal densities.
\newblock {\em Ann. Statist.}, 29(5):1233--1263.

\bibitem[\protect\astroncite{Ghosal and van~der Vaart}{2007b}]{ghosalvaart2007}
Ghosal, S. and van~der Vaart, A.~W. (2007b).
\newblock Posterior convergence rates of {D}irichlet mixtures at smooth
  densities.
\newblock {\em Ann. Statist.}, 35(2):697--723.

\bibitem[\protect\astroncite{Martin and Walker}{2013}]{martin.walker.eb}
Martin, R. and Walker, S.~G. (2013).
\newblock Asymptotically minimax empirical bayes estimation of a sparse normal
  mean.
\newblock Unpublished manuscript, {\tt arXiv:1304.7366}.

\bibitem[\protect\astroncite{Schwartz}{1965}]{schwartz1965}
Schwartz, L. (1965).
\newblock On {B}ayes procedures.
\newblock {\em Z. Wahrs. verw. Geb.}, 4:10--26.

\bibitem[\protect\astroncite{Scott et~al.}{2013}]{scott.shively.walker.2013}
Scott, J.~G., Shively, T.~S., and Walker, S.~G. (2013).
\newblock Nonparametric {B}ayesian testing for monotonicity.
\newblock Unpublished manuscript, {\tt arXiv:1304.3378}.

\bibitem[\protect\astroncite{Shen and Wasserman}{2001}]{shen.wasserman.2001}
Shen, X. and Wasserman, L. (2001).
\newblock Rates of convergence of posterior distributions.
\newblock {\em Ann. Statist.}, 29(3):687--714.

\bibitem[\protect\astroncite{van~der Vaart and van
  Zanten}{2008}]{vaart.zanten.2008}
van~der Vaart, A.~W. and van Zanten, J.~H. (2008).
\newblock Rates of contraction of posterior distributions based on {G}aussian
  process priors.
\newblock {\em Ann. Statist.}, 36(3):1435--1463.

\bibitem[\protect\astroncite{Walker}{2003}]{walker2003}
Walker, S. (2003).
\newblock On sufficient conditions for {B}ayesian consistency.
\newblock {\em Biometrika}, 90(2):482--488.

\bibitem[\protect\astroncite{Walker}{2004}]{walker2004a}
Walker, S. (2004).
\newblock New approaches to {B}ayesian consistency.
\newblock {\em Ann. Statist.}, 32(5):2028--2043.

\bibitem[\protect\astroncite{Walker and Hjort}{2001}]{walker.hjort.2001}
Walker, S. and Hjort, N.~L. (2001).
\newblock On {B}ayesian consistency.
\newblock {\em J. R. Stat. Soc. Ser. B Stat. Methodol.}, 63(4):811--821.

\bibitem[\protect\astroncite{Walker et~al.}{2005}]{walker.lijoi.prunster.2005a}
Walker, S.~G., Lijoi, A., and Pr{\"u}nster, I. (2005).
\newblock Data tracking and the understanding of {B}ayesian consistency.
\newblock {\em Biometrika}, 92(4):765--778.

\bibitem[\protect\astroncite{Walker et~al.}{2007}]{walker2007}
Walker, S.~G., Lijoi, A., and Pr{\"u}nster, I. (2007).
\newblock On rates of convergence for posterior distributions in
  infinite-dimensional models.
\newblock {\em Ann. Statist.}, 35(2):738--746.

\bibitem[\protect\astroncite{Wu and Ghosal}{2008}]{wu.ghosal.2008}
Wu, Y. and Ghosal, S. (2008).
\newblock Kullback {L}eibler property of kernel mixture priors in {B}ayesian
  density estimation.
\newblock {\em Electron. J. Stat.}, 2:298--331.

\end{thebibliography}

\end{document}